\documentclass[12pt]{article}
\usepackage{amssymb,amsmath,amsthm,amsfonts,enumerate, bbm, mathdots}

\usepackage{latexsym}
\usepackage{amscd}
\usepackage{stmaryrd}
\usepackage{color}
\usepackage[all]{xypic}
\usepackage{epsfig}
\usepackage{graphics}
\usepackage{ifthen}
\usepackage{varioref}
\usepackage{rotating}
\usepackage{extarrows}
\usepackage{cite}
\usepackage{mathrsfs}

\numberwithin{equation}{section}

\theoremstyle{plain}
\newtheorem{thm}{Theorem}[section]
\newtheorem{cor}[thm]{Corollary}
\newtheorem{lem}[thm]{Lemma}
\newtheorem{prop}[thm]{Proposition}
\newtheorem{rem}[thm]{Remark}

\newmuskip\pFqmuskip
\newcommand*\pFq[6][8]{%
 \begingroup % only local assignments
 \pFqmuskip=#1mu\relax
 \mathchardef\normalcomma=\mathcode`,
 \mathcode`\,=\string"8000
 \begingroup\lccode`\~=`\,
 \lowercase{\endgroup\let~}\pFqcomma
 {}_{#2}F_{#3}{\left(\genfrac..{0pt}{}{#4}{#5};#6\right)}%
 \endgroup
}
\newcommand{\pFqcomma}{{\normalcomma}\mskip\pFqmuskip}

\makeatletter
\newenvironment{proofof}[1]{\par
 \pushQED{\qed}%
 \normalfont \topsep6\p@\@plus6\p@\relax
 \trivlist
 \item[\hskip\labelsep
    \bfseries
  Proof of #1\@addpunct{.}]\ignorespaces
}{%
 \popQED\endtrivlist\@endpefalse
}
\makeatother

\definecolor{dg}{rgb}{0.0625,0.64,0.0625}
\usepackage[%dvipdfm,
      pdfstartview=FitH,
      CJKbookmarks=true,
      bookmarksnumbered=true,
      bookmarksopen=true,
      colorlinks,
      pdfborder=001,
      linkcolor=blue,
      anchorcolor=green,
      citecolor=red
      ]{hyperref}

\definecolor{deepmaroon}{RGB}{100,0,20}   % 深暗红
\definecolor{deepnavy}{RGB}{0,30,80}    % 藏青深蓝
\definecolor{deepforest}{RGB}{0,60,30}   % 森林深绿
\definecolor{deeppurple}{RGB}{128,0,255}   % 暗紫
\definecolor{deepteal}{RGB}{0,70,70}    % 深湖蓝
\definecolor{deepolive}{RGB}{60,65,30}   % 暗橄榄绿
\definecolor{darkredheavy}{RGB}{120,0,0}   % 浓深红
\definecolor{lightgreen}{RGB}{0,255,255}  % 纯浓藏青
\definecolor{darkgreenheavy}{RGB}{0,80,0}  % 浓墨绿
\definecolor{darkpurpleheavy}{RGB}{90,0,120} % 浓深紫
\definecolor{darktealheavy}{RGB}{0,90,90}  % 浓深青
\definecolor{darkorangeheavy}{RGB}{160,60,0} % 深橘棕
\definecolor{matcha}{RGB}{130,190,140}
\definecolor{emeraldDeep}{RGB}{0,115,90}
\definecolor{grassBright}{RGB}{20,180,60}

\newfont{\scyr}{wncyr10 scaled 550}

\def\proof{\noindent {\bf Proof.\;}}

\def\wt{\operatorname{wt}}
\def\dep{\operatorname{dep}}
\def\height{\operatorname{ht}}

\allowdisplaybreaks

\newcommand{\A}{\mathcal{A}}

\newcommand{\Z}{\mathbb{Z}}

\begin{document}

\title{Refined sum formulas for finite multiple zeta values of level two}
\date{~}
\author{
Zhenlu Wang\\\small
School of Artificial Intelligence, Taizhou University,\\ \small Taizhou 318000, Zhejiang, China\\\small
\textrm{zhenluwang@tzc.edu.cn}
}

\maketitle

\begin{abstract}
  In this paper, we establish two classes of refined sum formulas for finite multiple zeta values of level two, a variant of finite multiple zeta values recently introduced by M. Kaneko et al. One is an explicit formula for the sum of level-two finite multiple zeta values over indices in which one prescribed component is odd and all the remaining components are even. The other is an Ohno--Zagier type relation for level-two finite multiple zeta values, which gives a generating function for sums of fixed weight, depth and height in terms of depth-one values. As applications, we derive sum formulas for fixed weight and depth and for several special heights.
\end{abstract}

{\small
{\bf Keywords} 
finite multiple zeta values; level-two finite multiple zeta values; sum formulas

{\bf 2020 Mathematics Subject Classification} 11M32; 11A07
}

%%---------------------------------------------------------------------------
%%------------------------Content-------------------------------------------
%%----------------------------------------------------------------------------

\section{Introduction}
A finite sequence of positive integers is called an index. The weight, depth and height of an index $\boldsymbol{k}=(k_1,\ldots,k_r)$ are defined by
\begin{align*}
\wt(\boldsymbol{k})=k_1+\cdots+k_r,\quad
\dep(\boldsymbol{k})=r,\quad
\height(\boldsymbol{k})=\#\{j|1\le j\le r,k_j\ge 2\},
\end{align*}
respectively. Let $\mathcal P$ be the set of prime numbers and define the $\mathbb Q$-algebra $\mathcal{A}$ by
\begin{align*}
\A=\prod_{p\in\mathcal P}\Z/p\Z\Big/\bigoplus_{p\in\mathcal P}\Z/p\Z.
\end{align*}
Following Kaneko \cite{Kaneko2019}, for an index $\boldsymbol{k}=(k_1,\ldots,k_r)$, the finite multiple zeta value (FMZV) is defined by
\begin{align*}
\zeta_{\A}(\boldsymbol{k})=\left(\sum_{0<n_1<\cdots<n_r<p}\frac{1}{n_1^{k_1}\cdots n_r^{k_r}}\pmod p\right)_{p}\in\A.
\end{align*}
This is a finite analogue of the classical multiple
zeta value (MZV), defined by
\begin{align*}
\zeta(\boldsymbol{k})=\zeta(k_1,\ldots,k_r) =\sum_{0<n_1<\cdots<n_r}\frac{1}{n_1^{k_1}\cdots n_r^{k_r}}.
\end{align*}
This series converges if and only if $k_r\ge 2$, whereas no such restriction is required for the finite multiple zeta value $\zeta_{\A}(\boldsymbol{k})$. 

Multiple zeta values were systematically studied by Hoffman \cite{Hoffman1992} and Zagier \cite{Zagier1994} independently in the early 1990s and have since played an important role in both mathematics and theoretical physics. Since then, a large amount of work has been devoted to multiple zeta values and their numerous variants and generalizations, including finite multiple zeta values; see Zhao's monograph \cite{Zhao2016} for further details.

Kaneko and Zagier\cite{Kaneko2019,KanekoZagier2000} proposed a deep conjecture relating finite multiple zeta values to classical multiple zeta values through a mysterious connection. Although the conjecture remains open, numerous parallel results have been established on both the finite and classical sides, including sum formulas and weighted sum formulas for finite multiple zeta values \cite{HiroseMuraharaSaito2019,Kaneko2019,Kamano2018,SaitoWakabayashi2015}.

Recently, level-two variants obtained by imposing parity restrictions on the summation variables have attracted considerable attention. Important classical examples include Hoffman's multiple $t$-values \cite{Hoffman2019} and Kaneko--Tsumura's multiple $T$-values \cite{KanekoTsumura2020}. For an index $\boldsymbol{k}=(k_1,\ldots,k_r)$, Kaneko, Murakami and Yoshihara \cite{KanekoMurakamiYoshihara2023} introduced the finite multiple zeta value of level two (level-two FMZV) by
\begin{align*}
\zeta_{\A}^{(2)}(\boldsymbol{k})=\left(\sum_{0<n_1<\cdots<n_r<p/2}\frac{1}{n_1^{k_1}\cdots n_r^{k_r}}\pmod p\right)_p\in\A.
\end{align*}
Unlike ordinary finite multiple zeta values, the summation here is truncated at $p/2$ rather than at $p$. In particular,  level-two FMZVs may also be viewed, up to a constant multiple, as finite analogues of multiple $t$-values. They also showed \cite[Proposition 2.1]{KanekoMurakamiYoshihara2023} that 
\begin{align}
\zeta_{\A}^{(2)}(2m)=0\qquad(m\ge1).\label{eq:evenzero}
\end{align} 
Moreover, they established parity results and several sum formulas, and concluded with a weighted sum conjecture, which was recently proved by Li, the present author, and Zhang \cite{LiWangZhang2026}. Zhao \cite{ZhaoAxioms2024,ZhaoFoundations2024} studied finite multiple $T$-values and subsequently introduced finite multiple mixed values, which provide a common generalization of several level-two variants of FMZVs. More recently, Li and the present author \cite{LiWang2026} obtained a weighted sum formula for finite multiple mixed values.

The purpose of this paper is to establish two classes of refined sum formulas for  level-two FMZVs. The first concerns the following positional parity-restricted sum:
\begin{align*}
\sum_{\substack{k_1+\cdots+k_r=k\\k_i:\ {\rm odd},\ k_j:\ {\rm even}\ (\forall j\ne i)}}\zeta_{\A}^{(2)}(k_1,\ldots,k_r).
\end{align*}
Kaneko et al. proved that the above sum is a rational multiple of $\zeta_{\A}^{(2)}(k)$, but did not provide a general closed form for the rational coefficient \cite[Theorem 3.3]{KanekoMurakamiYoshihara2023}. We determine this rational coefficient explicitly and obtain the following theorem. 
\begin{thm}\label{thm:parity-sum}
Let $r$ be a positive integer and let $k\ge2r-1$ be an odd integer. For a fixed $i$ with $1\le i\le r$, we have
\begin{align}
\sum_{\substack{k_1+\cdots+k_r=k\\k_i:\ {\rm odd},\ k_j:\ {\rm even}\ (\forall j\ne i)}}\zeta_{\A}^{(2)}(k_1,\ldots,k_r)=C_{k,r,i}\zeta_{\A}^{(2)}(k),\label{eq:mainparity}
\end{align}
where $s_i=\min\{i,r-i\}$ and 
\begin{align*}
C_{k,r,i}=\frac{(-1)^{r-1}}{4^{r-1}}\binom{k-r}{r-1}\binom{r-1}{i-1}\prod_{j=0}^{s_i-1}\frac{k-2j}{k-2r+2+2j}.
\end{align*}
\end{thm}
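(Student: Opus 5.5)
The plan is to prove \eqref{eq:mainparity} by induction on the depth $r$. I will use two inputs: the vanishing \eqref{eq:evenzero} of even depth-one values, and the harmonic (stuffle) product, which holds for $\zeta^{(2)}_{\A}$ because the truncation $n<p/2$ is the same for every factor. Write
\[
S(k,r,i)=\sum_{\substack{k_1+\cdots+k_r=k\\ k_i\ \mathrm{odd},\ k_j\ \mathrm{even}\ (j\neq i)}}\zeta^{(2)}_{\A}(k_1,\ldots,k_r).
\]
The case $r=1$ is trivial, since $S(k,1,1)=\zeta^{(2)}_{\A}(k)$ and $C_{k,1,1}=1$.

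\emph{Step 1: the stuffle relation.} Fix $r\ge2$, an even $2a\ge2$ and a position $i'$ with $1\le i'\le r-1$. Summing the harmonic product $\zeta^{(2)}_{\A}(2a)\cdot\zeta^{(2)}_{\A}(k_1,\ldots,k_{r-1})$ over all depth-$(r-1)$ indices of weight $k-2a$ with odd entry at position $i'$ gives zero, by \eqref{eq:evenzero}. The product splits into two kinds of terms.
\begin{enumerate}[(a)]
\item Insertion terms: the new even entry $2a$ is placed in one of the $r$ gaps. This produces depth-$r$ indices whose odd entry sits at position $i'$ or $i'+1$, according to whether the insertion is to the right or to the left of the odd entry.
\item Merging terms: $2a$ is added to an existing entry. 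Adding it to an even entry keeps that entry even, and adding it to the odd entry keeps it odd, so these terms give depth-$(r-1)$ sums with the odd entry still at $i'$.
\end{enumerate}
I will then sum over all $a\ge1$ and keep track of multiplicities. A depth-$r$ index with entries $(\ldots,2b,\ldots)$ arises from every way of selecting one even entry as the ``new'' one. For merged terms, each entry $m$ arises from every split $m=2a+m'$ with $m'$ of the right parity. These multiplicities depend on the entries, so a plain sum over $a$ does not close.

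\emph{Step 2: making the relation close.} To get a closed system, I will encode the entries by a generating function instead. For a single variable $n$ put
\[
f_{\mathrm e}(n)=\sum_{a\ge1}\frac{x^{2a}}{n^{2a}}=\frac{x^2}{n^2-x^2},\qquad f_{\mathrm o}(n)=\frac{xn}{n^2-x^2}.
\]
Then $\sum_{k}S(k,r,i)x^k$ is the sum over $0<n_1<\cdots<n_r<p/2$ of the product of $f_{\mathrm e}(n_j)$ for $j\neq i$ and $f_{\mathrm o}(n_i)$. The all-even part is controlled by the congruence $\prod_{n=1}^{(p-1)/2}(x^2-n^2)\equiv x^{p-1}-1\pmod p$. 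This comes from pairing $n$ with $p-n$ in $\prod_{n=1}^{p-1}(x-n)\equiv x^{p-1}-1$. It shows that the elementary symmetric functions of $f_{\mathrm e}(n)$ over $n<p/2$, with a depth marker $y$, are trivial in $\A$ in every fixed degree, which is the generating-function form of \eqref{eq:evenzero} together with its stuffle consequences.

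Splitting the sum at $n_i$ into a left product over $m<n_i$ and a right product over $m>n_i$ then leaves a two-variable relation. This relation expresses $\sum_i S(k,r,i)$-type combinations, with weights in $y$ and in the position, through $S$ at depths $<r$. Extracting coefficients gives a linear recursion
\[
S(k,r,i)=\alpha\,S(k,r,i-1)+\sum_{r'<r}\beta_{r'}\,S(\cdot,r',\cdot),
\]
where the coefficients $\alpha,\beta_{r'}$ are explicit rational functions of $k,r,i$. The sums $S(\cdot,r',\cdot)$ are already known by induction to be multiples of $\zeta^{(2)}_{\A}(k)$. The case $i=1$ (odd entry first) will be handled directly, since there the right product runs over $m>n_1$ only and reduces to a single convolution.

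\emph{Step 3: verifying the closed form.} Substitute the claimed $C_{k,r,i}$ into the recursion and check it. The factors $\binom{k-r}{r-1}\binom{r-1}{i-1}/4^{r-1}$ should come out naturally from counting compositions and positions. I expect the main obstacle to be the product $\prod_{j<s_i}\frac{k-2j}{k-2r+2+2j}$, with its asymmetric cutoff $s_i=\min\{i,r-i\}$. This step amounts to a terminating hypergeometric (Chu--Vandermonde or Saalsch\"utz type) identity, and the recursion will have to be treated separately in the two regimes $i\le r/2$ and $i>r/2$.

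If this direct verification becomes unwieldy, I will use an alternative route for Step 3. First compute the total $\sum_i S(k,r,i)$, which should come from the generating function alone. Then compute the first moment in $i$, and use the recursion in $i$ only to pin down the ratios $C_{k,r,i}/C_{k,r,i-1}$. The piecewise shape of $s_i$ suggests that these ratios change form at $i=r/2$, and I will check this on small $r$ (say $r=2,3$) against the recursion before attempting the general identity.
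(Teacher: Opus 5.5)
Your Step~1 already closes, contrary to what you say; writing $P^{(i)}_{k,r}$ for your $S(k,r,i)$, the multiplicities depend only on $k$, $r$ and $i$. Start from a depth-$(r-1)$ index with odd entry at position $i$ and insert $2a$. A given depth-$r$ index with odd entry at position $i$ arises exactly $r-i$ times, once for each even entry to the right of the odd one. One with odd entry at position $i+1$ arises exactly $i$ times. For the merged terms, $(2a_1,\ldots,2a_i+1,\ldots,2a_{r-1})$ arises $a_i+\sum_{j\ne i}(a_j-1)=\frac{k-2r+3}{2}$ times in total, because the weight is fixed. Summing over $a$ therefore gives
\[
(r-i)P^{(i)}_{k,r}+iP^{(i+1)}_{k,r}+\frac{k-2r+3}{2}P^{(i)}_{k,r-1}=0,\qquad 1\le i\le r-1,
\]
which is exactly the recursion in $i$ you are after. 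Your Step~2 cannot add anything to this. The congruence $\prod_{n\le(p-1)/2}(x^2-n^2)\equiv x^{p-1}-1$ is equivalent to the vanishing of the values $\zeta^{(2)}_{\A}(2m)$, so everything you extract from it follows from the harmonic product and \eqref{eq:evenzero}.

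The genuine gap is that these relations do not determine the positional sums, and your proposal has no source for the missing information. At depth $r$ you get $r-1$ equations for $r$ unknowns. The homogeneous solutions are the multiples of $\bigl((-1)^{j-1}\binom{r-1}{j-1}\bigr)_j$, so $\sum_j v_jP^{(j)}_{k,r}$ is fixed by lower depths only when $(v_j)$ has vanishing $(r-1)$-st finite difference. Already for $k=5$, $r=2$, the weight-$5$, depth-$\le2$ relations from the harmonic product and \eqref{eq:evenzero} are spanned by the expansions of $\zeta^{(2)}_{\A}(4)\zeta^{(2)}_{\A}(1)=0$ and $\zeta^{(2)}_{\A}(2)\zeta^{(2)}_{\A}(3)=0$. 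They give only $P^{(1)}_{5,2}+P^{(2)}_{5,2}=-2\zeta^{(2)}_{\A}(5)$, whereas the theorem asserts the split $-\frac54$, $-\frac34$.

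So your base case $i=1$ cannot be ``handled directly''. Using the triviality of the full even product, $\sum_n f_{\mathrm o}(n)\prod_{m>n}(1+zf_{\mathrm e}(m))$ becomes $\sum_n f_{\mathrm o}(n)\prod_{m\le n}(1+zf_{\mathrm e}(m))^{-1}$. This only re-expresses $P^{(1)}_{k,r}$ through equally unknown star-type sums with the odd entry last; for $r=2$ it returns $P^{(1)}_{k,2}=-P^{(2)}_{k,2}-\frac{k-1}{2}\zeta^{(2)}_{\A}(k)$. Your fallback fails for the same reason: the total ($v_j=1$) and the first moment ($v_j=j-1$) have vanishing $(r-1)$-st difference for $r\ge3$, so the recursion already implies them.

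What you need is arithmetic input beyond the harmonic product and \eqref{eq:evenzero}. The paper takes it from the Hessami Pilehrood--Hessami Pilehrood--Tauraso evaluation $\zeta^{(2)}_{\A}(\{2\}^{i-1},1,\{2\}^{R-i})=\frac{(-1)^{R-1}}{4^{R-1}}\binom{k}{2i-1}\zeta^{(2)}_{\A}(k)$ at the maximal depth $R=(k+1)/2$. It then runs the recursion \emph{downward}, which is uniquely solvable because $k-2r+1\ne0$ for $r<R$. Checking the closed form against that recursion needs only an elementary ratio identity for $\prod_{j<s}\frac{k-2j}{k-2r+2+2j}$, not a Saalsch\"utz-type sum.
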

Here and below, an empty product is understood to be $1$, and $\{ k\}^a$ denotes the sequence of $k$ repeated $a$ times. The case $r=2$ of \eqref{eq:mainparity} also follows from \cite[Proposition 2.1(ii)]{KanekoMurakamiYoshihara2023}. The proof of Theorem \ref{thm:parity-sum} is based on the harmonic product for level-two FMZVs. For example, one has 
\begin{align*}
\zeta_{\A}^{(2)}(a)\zeta_{\A}^{(2)}(b)=\zeta_{\A}^{(2)}(a,b)+\zeta_{\A}^{(2)}(b,a)+\zeta_{\A}^{(2)}(a+b).
\end{align*}
Combining the harmonic product with \eqref{eq:evenzero}, we derive a recurrence relation for the rational coefficients $C_{k,r,i}$. Using the evaluation of $ \zeta_{\A}^{(2)}(\{2\}^{a},1,\{2\}^{b})$ due to Hessami Pilehrood, Hessami Pilehrood, and Tauraso \cite{HPT2014}, we obtain the desired explicit expression for $C_{k,r,i}$. 

For integers $k\ge r\ge 1$ and $ 0\le h\le r$, let $I(k, r, h)$ be the set of indices of weight $k$, depth $r$, and height $h$. The second concerns the following sum for level-two FMZVs of fixed weight, depth and height:
\begin{align*}
S(k,r,h)=\sum_{\boldsymbol{k}\in I(k,r,h)}\zeta_{\A}^{(2)}(k_1,\ldots,k_r).%\quad k\ge r\ge 1,\quad 0\le h\le r.
\end{align*}

%If the indices set is empty, the sum is treated as zero. We also set S^{(2)}(0,0,0)=1.

\begin{thm}\label{thm:oz-sum}
Let $x,y,u$ be formal variables. In $\A[[x,y,u]]$, we have
\begin{align}
1+\sum_{k\ge r\ge1}\sum_{h=0}^{r}S(k,r,h)x^{k-r}y^ru^h=\exp\left\{\sum_{n\ge1}\frac{\zeta_{\A}^{(2)}(n)}{n}\left(x^n-\alpha^n-\beta^n\right)\right\},\label{eq:OZ}
\end{align}
where $\alpha$ and $\beta$ are formally determined by
\begin{align}
\alpha+\beta=x-y,\qquad\alpha\beta=-xy(1-u).\label{eq:alphabeta}
\end{align}
\end{thm}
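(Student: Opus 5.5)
The plan is to prove \eqref{eq:OZ} prime by prime, as an identity of formal power series built from truncated harmonic sums, and then pass to $\A$ coefficientwise. Fix an odd prime $p$ and put $N=(p-1)/2$, so that $0<n<p/2$ means $1\le n\le N$. Let $F_N(x,y,u)$ be the left-hand side of \eqref{eq:OZ} with each $\zeta_{\A}^{(2)}(\boldsymbol{k})$ replaced by the rational number $\sum_{0<n_1<\cdots<n_r\le N} n_1^{-k_1}\cdots n_r^{-k_r}$.

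First, each strictly increasing tuple $n_1<\cdots<n_r$ is the same as a choice of a subset of $\{1,\ldots,N\}$, with an exponent attached to each chosen element. Hence $F_N$ factors as a product over $n=1,\ldots,N$. Each factor sums three contributions: $1$ if $n$ is not chosen, $y/n$ if $n$ is chosen with exponent $1$, and $yu\,x^{k-1}/n^k$ summed over $k\ge2$ otherwise. That last sum is $yux/(n(n-x))$. Putting the three terms over the common denominator $n(n-x)$ gives the numerator
\begin{align*}
n^2-(x-y)n-xy(1-u)=(n-\alpha)(n-\beta),
\end{align*}
by exactly the relations \eqref{eq:alphabeta}. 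This yields
\begin{align*}
F_N=\prod_{n=1}^{N}\frac{(1-\alpha/n)(1-\beta/n)}{1-x/n}.
\end{align*}

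Second, we take logarithms using $\log(1-t/n)=-\sum_{m\ge1}t^m/(m n^m)$. This gives
\begin{align*}
F_N=\exp\Bigl\{\sum_{m\ge1}\frac{H_N(m)}{m}\bigl(x^m-\alpha^m-\beta^m\bigr)\Bigr\},\qquad H_N(m)=\sum_{n=1}^N n^{-m}.
\end{align*}
Here $\alpha^m+\beta^m$ is, by Newton's identities, a polynomial with integer coefficients in $\alpha+\beta=x-y$ and $\alpha\beta=-xy(1-u)$. So everything is an honest element of $\mathbb{Q}[[x,y,u]]$, and the symmetric expression does not depend on any choice of individual roots $\alpha,\beta$.

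The step needing care is the reduction modulo $p$. The coefficient of a fixed monomial $x^ay^bu^c$ on the right involves only finitely many $m$, at most $a+b$, together with finitely many exponential-series denominators. It is therefore a fixed polynomial, with rational coefficients independent of $p$, in the numbers $H_N(1),\ldots,H_N(a+b)$. For all primes $p$ larger than the denominators occurring in that polynomial, and larger than $a+b$ so that the $1/m$ are $p$-integral, both sides are $p$-integral and may be reduced mod $p$. Since $H_N(m)\bmod p$ is the $p$-component of $\zeta_{\A}^{(2)}(m)$, the two sides agree in $\mathbb{Z}/p\mathbb{Z}$ for all but finitely many $p$. That is precisely equality in $\A$. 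Doing this coefficient by coefficient gives \eqref{eq:OZ} in $\A[[x,y,u]]$, where the right-hand side is interpreted through the same universal polynomials.
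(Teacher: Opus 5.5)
Your proposal is correct and follows the paper's proof. You factor the truncated generating function as $\prod_{n\le N}(n-\alpha)(n-\beta)/(n(n-x))$, as in the paper's Lemma on $F_p$, then take the formal logarithm and pass to $\A$ coefficientwise. The only difference is that you establish the identity in $\mathbb{Q}[[x,y,u]]$ first and then reduce modulo primes $p>a+b$, which spells out the paper's remark that the logarithmic computation is valid for all sufficiently large $p$ for each fixed coefficient.
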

\begin{rem}
Since only the symmetric combinations $\alpha^n+\beta^n$ occur in \eqref{eq:OZ}, they can be expressed in terms of $\alpha+\beta$ and $\alpha\beta$. Hence the right-hand side of \eqref{eq:OZ} is well defined in $\mathcal A[[x,y,u]]$.
\end{rem}
 Theorem \ref{thm:oz-sum} provides a finite level-two analogue of the Ohno--Zagier relation for classical MZVs, which gives a generating function for sums of multiple zeta values of fixed weight, depth and height in terms of Riemann zeta values \cite{OhnoZagier2001}. As applications, we derive sum formulas for fixed weight and depth, height zero, height one, and maximal height, together with an identity for indices of the form $(\{2\}^{a},m,\{2\}^{b})$.

The paper is organized as follows. In Section \ref{sec:parity-sum}, we prove Theorem \ref{thm:parity-sum}. In Section \ref{sec:oz-sum}, we establish the Ohno--Zagier type relation and give its principal corollaries, including sum formulas for fixed weight and depth and for several special heights.

%%---------------------------%%--------------------
\section{Positional parity-restricted sum}\label{sec:parity-sum}
For an odd integer $k$ and integers $r,i$ satisfying 
$$1\le r\le\frac{k+1}{2},\qquad 1\le i\le r,$$ 
we denote the positional parity-restricted sum by $P_{k,r}^{(i)}$ as follows: 
\begin{align*}
 P_{k,r}^{(i)}=\sum_{\substack{k_1+\cdots+k_r=k\\k_i:\ {\rm odd},\ k_j:\ {\rm even}\ (\forall j\ne i)}}\zeta_{\A}^{(2)}(k_1,\ldots,k_r).%\label{eq:Pdef}
\end{align*}

Combining the harmonic product with \eqref{eq:evenzero}, we obtain the following recurrence relation for the positional parity-restricted sums $P_{k,r}^{(i)}$.

\begin{lem}\label{lem:posrec}
Let $k$ be an odd integer and suppose that
$$2\le r\le\frac{k+1}{2},\qquad 1\le i\le r-1.$$
Then we have
\begin{align}
(r-i)P_{k,r}^{(i)}+iP_{k,r}^{(i+1)}+\frac{k-2r+3}{2}P_{k,r-1}^{(i)}=0.\label{eq:posrec}
\end{align}
\end{lem}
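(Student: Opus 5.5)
The plan is to obtain \eqref{eq:posrec} by expanding a vanishing product with the harmonic product and counting how often each index occurs. By \eqref{eq:evenzero}, $\zeta_{\A}^{(2)}(a)=0$ for every even $a\ge2$. Hence
\begin{align*}
0=\sum_{\substack{a\ge2\\ a:\ \mathrm{even}}}\zeta_{\A}^{(2)}(a)\,P_{k-a,r-1}^{(i)}
=\sum_{\substack{a\ge2\\ a:\ \mathrm{even}}}\ \sum_{\boldsymbol{l}}\zeta_{\A}^{(2)}(a)\,\zeta_{\A}^{(2)}(\boldsymbol{l}).
\end{align*}
Here $\boldsymbol{l}=(l_1,\ldots,l_{r-1})$ runs over indices of weight $k-a$ with $l_i$ odd and all other $l_j$ even. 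The sum is finite, since $k-a\ge 2(r-1)-1$ is needed for nonempty inner sums. It is meaningful because $r-1\ge1$ and $i\le r-1$.

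Next I expand each product $\zeta_{\A}^{(2)}(a)\zeta_{\A}^{(2)}(\boldsymbol{l})$ with the depth-one-times-depth-$(r-1)$ harmonic product. This gives $r$ \emph{insertion} terms, where $a$ is placed into one of the $r$ slots of $\boldsymbol{l}$. It also gives $r-1$ \emph{merging} terms, where $a$ is added to some $l_j$.

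Insertion terms. Every resulting depth-$r$ index has weight $k$ and exactly one odd entry, namely the image of $l_i$. That odd entry ends up in position $i$ if $a$ is inserted after $l_i$, and in position $i+1$ otherwise. Conversely, fix a depth-$r$ index $\boldsymbol{k}$ of weight $k$ whose odd entry is in position $i$. It is produced once for each choice of an even entry of $\boldsymbol{k}$ in a position $j>i$ to play the role of $a$; there are $r-i$ such choices. Similarly, an index whose odd entry is in position $i+1$ is produced once for each even entry in a position $j\le i$, giving $i$ choices. So the insertion terms contribute exactly $(r-i)P_{k,r}^{(i)}+iP_{k,r}^{(i+1)}$.

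Merging terms. The result is a depth-$(r-1)$ index $\boldsymbol{c}$ of weight $k$ with the odd entry in position $i$. To count its multiplicity, I count the decompositions $c_j=a+l_j$ with $a\ge2$ even and $l_j\ge1$ of the same parity as $c_j$:
\begin{itemize}
\item for $j\ne i$ ($c_j$ even), there are $c_j/2-1$ of them;
\item for $j=i$ ($c_i$ odd), there are $(c_i-1)/2$ of them.
\end{itemize}
Summing over $j$ gives
$$\frac{k-1}{2}-(r-2)=\frac{k-2r+3}{2},$$
which is independent of $\boldsymbol{c}$. The merging terms therefore contribute $\frac{k-2r+3}{2}P_{k,r-1}^{(i)}$, and adding the two contributions yields \eqref{eq:posrec}.

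The only delicate point is the bookkeeping: each index must be reconstructed uniquely from its decomposition, and the ranges must be checked. In particular, every decomposition must correspond to an admissible pair $(a,\boldsymbol{l})$ appearing in the sum, which holds since $l_j\ge1$ is automatic. I expect this counting step to be the main thing to verify carefully. The crucial observation is that the merging multiplicity turns out to be the constant $\frac{k-2r+3}{2}$.
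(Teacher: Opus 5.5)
Your proposal is correct and follows the paper's proof essentially step for step. You expand the vanishing sum $\sum_{a\ \mathrm{even}}\zeta_{\A}^{(2)}(a)P_{k-a,r-1}^{(i)}$ by the harmonic product, count insertion terms by deleting an even entry to the right (resp.\ left) of the odd one to get $(r-i)P_{k,r}^{(i)}+iP_{k,r}^{(i+1)}$, and count merge terms via $\frac{c_i-1}{2}+\sum_{j\ne i}\bigl(\frac{c_j}{2}-1\bigr)=\frac{k-2r+3}{2}$, exactly as the paper does.
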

\proof
By \eqref{eq:evenzero}, we have
\begin{align}
\sum_{l=1}^{(k-2r+3)/2}\zeta_{\A}^{(2)}(2l)P_{k-2l,r-1}^{(i)}=0.\label{eq:zero-product}
\end{align}
Expanding the left-hand side of \eqref{eq:zero-product} by the harmonic product, each resulting term appears in one of the sums $$P_{k,r}^{(i)},\quad P_{k,r}^{(i+1)},\quad P_{k,r-1}^{(i)}.$$ 

For the insertion terms, if the odd component of the resulting index is in position $i$, then deleting one of the $r-i$ even components to its right recovers an index contributing to $P_{k-2l,r-1}^{(i)}$. Hence these terms contribute
$(r-i)P_{k,r}^{(i)}$. If the odd component is in position $i+1$, then deleting one of the $i$ even components to its left shifts the odd component back to position $i$. Hence these terms contribute $iP_{k,r}^{(i+1)}$.

For the merge terms, fix an index contributing to $P_{k,r-1}^{(i)}$ and write it as $$(2a_1,\ldots,2a_{i-1},2a_i+1,2a_{i+1},\ldots,2a_{r-1}).$$
From an even component $2a_j$, one can extract a positive even integer $2l$ in $a_j-1$ ways, while from the odd component $2a_i+1$, one can extract it in $a_i$ ways. Hence the total multiplicity of the contributions to $P_{k,r-1}^{(i)}$ is $$a_i+\sum_{j\ne i}(a_j-1)=\frac{k-1}{2}-(r-2)=\frac{k-2r+3}{2}.$$ 
Collecting the insertion and merge terms in \eqref{eq:zero-product} gives \eqref{eq:posrec}.
\qed

Replacing $r$ by $r+1$ in \eqref{eq:posrec}, we obtain 
\begin{align}
P_{k,r}^{(i)}=-\frac{2}{k-2r+1}\left((r+1-i)P_{k,r+1}^{(i)}+iP_{k,r+1}^{(i+1)}\right)\label{eq:downward}
\end{align}
for $1\le r\le\frac{k-1}{2}$ and $1\le i\le r$. Thus, the recurrence relation \eqref{eq:downward} allows us to determine the positional parity-restricted sums at depth $r$ from those at depth $r+1$.

Fix an odd weight $k$ and set $$R=\frac{k+1}{2},$$ which is the maximal depth. At depth $R$, every even component must be $2$, while the unique odd component must be $1$. Hence, for $1\le i\le R$, we have
\begin{align}
P_{k,R}^{(i)}=\zeta_{\A}^{(2)}(\{2\}^{i-1},1,\{2\}^{R-i})=\frac{(-1)^{R-1}}{4^{R-1}}\binom{k}{2i-1}\zeta_{\A}^{(2)}(k),\label{eq:extremal}
\end{align}
where the last equality follows from \cite[Theorem 5.4]{HPT2014}.

Motivated by the downward recurrence relation \eqref{eq:downward},  we define the rational numbers $C_{k,r,i}$ recursively as follows. At the maximal depth $R$, set
\begin{align}
C_{k,R,i}=\frac{(-1)^{R-1}}{4^{R-1}}\binom{k}{2i-1},\qquad 1\le i\le R.\label{eq:Cterminal}
\end{align} 
For $1\le r<R$ and $1\le i\le r$, define 
\begin{align}
C_{k,r,i}=-\frac{2}{k-2r+1}\left((r+1-i)C_{k,r+1,i}+iC_{k,r+1,i+1}\right).\label{eq:Cdownward}
\end{align}
Since $k-2r+1\ne0$ for $r<R$, the values at depth $R$ uniquely determine all the rational numbers $C_{k,r,i}$ by downward recurrence. The following lemma gives an explicit closed form for $C_{k,r,i}$.
\begin{lem}\label{lem:Cexplicit}
Let $k$ be an odd integer and $r,i$ be positive integers satisfying $k\ge2r-1$ and $1\le i\le r$. Then the rational numbers $C_{k,r,i}$ determined by \eqref{eq:Cterminal} and \eqref{eq:Cdownward} are given explicitly by
\begin{align}
C_{k,r,i}=\frac{(-1)^{r-1}}{4^{r-1}}\binom{k-r}{r-1}\binom{r-1}{i-1}\prod_{j=0}^{s_i-1}\frac{k-2j}{k-2r+2+2j},\label{eq:Cproduct}
\end{align}
where $s_i=\min\{i,r-i\}$ and an empty product is understood to be $1$.
\end{lem}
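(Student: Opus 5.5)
We prove \eqref{eq:Cproduct} by downward induction on $r$, starting from $r=R=(k+1)/2$. Denote the right-hand side of \eqref{eq:Cproduct} by $D_{k,r,i}$. Since \eqref{eq:Cterminal} and \eqref{eq:Cdownward} determine the $C_{k,r,i}$ uniquely, it suffices to check two things: that $D_{k,R,i}$ agrees with \eqref{eq:Cterminal}, and that the $D_{k,r,i}$ satisfy the recurrence \eqref{eq:Cdownward} for $1\le r<R$ and $1\le i\le r$.

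\emph{Base case.} At $r=R$ we have $k-R=R-1$, so $\binom{k-R}{R-1}=1$. The denominators become $k-2R+2+2j=1+2j$, so the product equals $\prod_{j=0}^{s-1}(k-2j)/(2j+1)$ with $s=\min\{i,R-i\}$. We must show
\[
\binom{R-1}{i-1}\prod_{j=0}^{s-1}\frac{k-2j}{2j+1}=\binom{k}{2i-1}.
\]
Write $\binom{k}{2i-1}=\prod_{m=0}^{2i-2}(k-m)/(m+1)$ when $2i-1\le R-1$, or use the symmetry $\binom{k}{2i-1}=\binom{k}{k-2i+1}$ otherwise. Then split the factors into even and odd $m$. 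The even-indexed factors give $\prod_j (k-2j)/(2j+1)$. The odd-indexed factors $(k-1-2j)/(2j+2)=(R-1-j)/(j+1)$ telescope into the binomial $\binom{R-1}{\cdot}$. The two cases $i\le R-i$ and $i>R-i$ correspond to using $2i-1$ or $k-2i+1=2(R-i)$ terms, and the parity bookkeeping shows that this matches $s_i$.

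\emph{Inductive step.} Assume $C_{k,r+1,i}=D_{k,r+1,i}$ for all $i$. We need
\[
-(k-2r+1)D_{k,r,i}=2\bigl((r+1-i)D_{k,r+1,i}+iD_{k,r+1,i+1}\bigr).
\]
Factor out the common prefactor $(-1)^{r}4^{-r}$ and divide both sides by $D_{k,r,i}$. This reduces the identity to a rational-function identity in $k,r,i$. The binomial ratios simplify as follows:
\[
\binom{k-r-1}{r}\Big/\binom{k-r}{r-1}=\frac{(k-2r+1)(k-2r)}{r(k-r)},\qquad \binom{r}{i-1}\Big/\binom{r-1}{i-1}=\frac{r}{r-i+1},\qquad \binom{r}{i}\Big/\binom{r-1}{i-1}=\frac{r}{i}.
\]
Consequently the coefficients $(r+1-i)$ and $i$ cancel nicely, and the identity becomes roughly
\[
\frac{(k-2r)}{2(k-r)}\bigl(\Pi_{r+1,i}+\Pi_{r+1,i+1}\bigr)=\Pi_{r,i},
\]
where $\Pi_{r,i}$ denotes the product factor.

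\emph{Main obstacle.} The main difficulty is the case analysis on $s_i$, since $\min\{i,r-i\}$, $\min\{i,r+1-i\}$ and $\min\{i+1,r-i\}$ change regime at different places. There are three cases.

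\textbf{Case $i<r-i$.} Here $s_i=i$ at level $r$ and level $r+1$, and $s_{i+1}=i+1$ at level $r+1$ (or $=r-i$ at the boundary $2i+1=r$).

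\textbf{Case $i\ge r-i+1$.} Here all the relevant values of $s$ equal $r-i$ or $r+1-i$ appropriately.

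\textbf{Boundary case $2i=r$.} This is checked separately.

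In each case, the products at level $r+1$ have denominators $k-2r+2j$, shifted by one relative to those at level $r$, which are $k-2r+2+2j$. The quotient $\Pi_{r+1,i}/\Pi_{r,i}$ therefore telescopes to a ratio of two or three linear factors, for instance $(k-2r+2s)/(k-2r)$ times possibly an extra factor $(k-2s)/(k-2r+2s)$. After this, the two terms add to $2(k-r)/(k-2r)$ by a one-line linear identity such as $(k-2i)+(2i-2r+k)\cdots$. I would first do the generic case $2i<r-1$ carefully, then verify the remaining boundary cases.
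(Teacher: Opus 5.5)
Your proposal is correct and follows essentially the same route as the paper. You check the terminal values at $r=R$, then verify the downward recurrence, which after cancelling the binomial factors reduces exactly to your identity $\Pi_{r,i}=\frac{k-2r}{2(k-r)}(\Pi_{r+1,i}+\Pi_{r+1,i+1})$; this is the paper's reduced $\Phi$-identity with $r$ shifted by one. The paper only streamlines your three cases by proving once that $\Phi_{r+1,s}+\Phi_{r+1,s+1}=\frac{2(k-r)}{k-2r}\Phi_{r,s}$ for every $s\ge0$ (your linear identity $(k-2s)+(k-2r+2s)=2(k-r)$), so that only your boundary case $2i=r$ needs separate treatment.
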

\proof
It suffices to verify that the right-hand side of \eqref{eq:Cproduct} satisfies both the prescribed terminal values \eqref{eq:Cterminal} and the recurrence relation \eqref{eq:Cdownward}. First let $r=R$. Since $k=2R-1$, \eqref{eq:Cproduct} becomes
\begin{align*}
C_{k,R,i}=\frac{(-1)^{R-1}}{4^{R-1}}\binom{R-1}{i-1}\prod_{j=0}^{s_i-1}\frac{2R-1-2j}{2j+1},\quad s_i=\min\{i,R-i\}.
\end{align*}
A direct simplification gives
\begin{align*}
\binom{R-1}{i-1}\prod_{j=0}^{s_i-1}\frac{2R-1-2j}{2j+1}=\binom{2R-1}{2i-1}.%\label{eq:doublefact}
\end{align*}
Hence, we have $$C_{k,R,i}=\frac{(-1)^{R-1}}{4^{R-1}}\binom{k}{2i-1},$$
which is precisely the terminal value \eqref{eq:Cterminal}.

 Replacing $r$ by $r-1$ in \eqref{eq:Cdownward} and rearranging, it remains to verify
\begin{align*}
(r-i)C_{k,r,i}+iC_{k,r,i+1}+\frac{k-2r+3}{2}C_{k,r-1,i}=0%\label{eq:Crec}
\end{align*}
for $2\le r\le R$ and $1\le i\le r-1$.
Set 
\begin{align}
B_r=\frac{(-1)^{r-1}}{4^{r-1}}\binom{k-r}{r-1},\qquad\Phi_{r,s}=\prod_{j=0}^{s-1}\frac{k-2j}{k-2r+2+2j}.\label{eq:phi-rs}
\end{align}
Then, by \eqref{eq:Cproduct}, we have $$C_{k,r,i}=B_r\binom{r-1}{i-1}\Phi_{r,s_i},\quad s_i=\min\{i,r-i\}.$$

Using $$(r-i)\binom{r-1}{i-1}=i\binom{r-1}{i}=(r-1)\binom{r-2}{i-1}$$
and $$\frac{B_{r-1}}{B_r}=\frac{-4(r-1)(k-r+1)}{(k-2r+3)(k-2r+2)},$$
it suffices to prove
\begin{align}
\Phi_{r,s_i}+\Phi_{r,s_{i+1}}=\frac{k-r+1}{\frac{k}{2}-r+1}\Phi_{r-1,s_i'},\qquad s_i'=\min\{i,r-1-i\}.\label{eq:Phirec}
\end{align}

Put $k'=k/2$. We first note that, for every nonnegative integer $s$,
\begin{align}
\Phi_{r,s}+\Phi_{r,s+1}=\frac{k-r+1}{k'-r+1}\Phi_{r-1,s}.\label{eq:Phis-identity}
\end{align}
Indeed, by the definition of $\Phi_{r,s}$ in \eqref{eq:phi-rs}, we obtain
\begin{align*}
\frac{\Phi_{r,s+1}}{\Phi_{r,s}}=\frac{k'-s}{k'-r+s+1},\qquad
\frac{\Phi_{r-1,s}}{\Phi_{r,s}}=\frac{k'-r+1}{k'-r+s+1},
\end{align*}
from which \eqref{eq:Phis-identity} follows immediately.

Suppose first that $s_i\ne s_{i+1}$. By the definitions of $s_i$, $s_{i+1}$ and $s_i'$, the integers $s_i$ and $s_{i+1}$ are consecutive, with the smaller one equal to $s_i'$. Hence, we have
\begin{align*}
\Phi_{r,s_i}+\Phi_{r,s_{i+1}}=\Phi_{r,s_i'}+\Phi_{r,s_i'+1}.
\end{align*}
Then, applying \eqref{eq:Phis-identity} with $s=s_i'$ yields \eqref{eq:Phirec} immediately.

 It remains to consider $s_i=s_{i+1}$. This case occurs precisely when $r$ is odd and $i=(r-1)/2$. Set $q=(r-1)/2$. Then, we have $s_i=s_{i+1}=s_i'=q$ and
\begin{align*}
\frac{\Phi_{r-1,q}}{\Phi_{r,q}}=\frac{k'-r+1}{k'-q}.
\end{align*}
Since $k-r+1=2(k'-q)$, we have
\begin{align*}
\frac{k-r+1}{k'-r+1}\Phi_{r-1,q}=2\Phi_{r,q},%=\Phi_{r,s_i}+\Phi_{r,s_{i+1}},
\end{align*}
which proves \eqref{eq:Phirec} in the remaining case.

Therefore, \eqref{eq:Cproduct} satisfies the recurrence relation \eqref{eq:Cdownward}. Together with the verification of the terminal values, this completes the proof.
\qed
 
\begin{proofof}{Theorem \ref{thm:parity-sum}}
At the maximal depth $R=(k+1)/2$, \eqref{eq:mainparity} follows from the evaluation \eqref{eq:extremal} and the terminal value \eqref{eq:Cterminal}. Suppose that  \eqref{eq:mainparity} holds at depth $r+1$ for all $1\le j\le r+1$, with $1\le r<R$. For $1\le i\le r$, by \eqref{eq:downward}, we have
 \begin{align*}
 P_{k,r}^{(i)}&=-\frac{2}{k-2r+1}\left((r+1-i)P_{k,r+1}^{(i)}+iP_{k,r+1}^{(i+1)}\right)\\
 &=-\frac{2}{k-2r+1}\left((r+1-i)C_{k,r+1,i}+iC_{k,r+1,i+1}\right)\zeta_{\A}^{(2)}(k)\\
 &=C_{k,r,i}\zeta_{\A}^{(2)}(k),
 \end{align*}
where the last equality follows from \eqref{eq:Cdownward}. Thus, by downward induction on $r$, \eqref{eq:mainparity} holds for all $1\le r\le R$ and $1\le i\le r$. The explicit expression for $C_{k,r,i}$ is given by \eqref{eq:Cproduct}. This completes the proof.
\end{proofof}

Taking $i=r$ in Theorem \ref{thm:parity-sum}, we obtain the following corollary.

\begin{cor}
Let $r$ be a positive integer and let $k\ge2r-1$ be odd. Then we have
\begin{align*}
 \sum_{\substack{k_1+\cdots+k_r=k\\ k_1,\ldots,k_{r-1}:\ {\rm even},\ k_r:\ {\rm odd}}}\zeta_{\A}^{(2)}(k_1,\ldots,k_r)=\frac{(-1)^{r-1}}{4^{r-1}}\binom{k-r}{r-1}\zeta_{\A}^{(2)}(k).
 %\label{eq:boundary}
\end{align*}
\end{cor}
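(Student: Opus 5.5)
This corollary is the special case $i=r$ of Theorem \ref{thm:parity-sum}, so the plan is to specialize the theorem and simplify the constant. With $i=r$ the parity constraint becomes ``$k_r$ odd, $k_1,\ldots,k_{r-1}$ even''. Hence the left-hand side is exactly $P_{k,r}^{(r)}$, and by \eqref{eq:mainparity} it equals $C_{k,r,r}\zeta_{\A}^{(2)}(k)$. The hypotheses match: $r\ge1$, $k$ odd, and $k\ge 2r-1$.

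It remains to evaluate $C_{k,r,r}$ from the explicit formula \eqref{eq:Cproduct}.

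\begin{itemize}
\item Here $s_r=\min\{r,r-r\}=0$, so the product $\prod_{j=0}^{s_r-1}$ is empty and equals $1$.
\item The binomial factor is $\binom{r-1}{r-1}=1$.
\end{itemize}

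Therefore
\begin{align*}
C_{k,r,r}=\frac{(-1)^{r-1}}{4^{r-1}}\binom{k-r}{r-1},
\end{align*}
which is the stated coefficient.

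There is no real obstacle. The only points to check are that the convention for the empty product when $s_i=0$ is the one fixed in Lemma \ref{lem:Cexplicit}, and that the boundary case $r=1$ is consistent. For $r=1$ the sum reduces to $\zeta_{\A}^{(2)}(k)$ and the coefficient is $\binom{k-1}{0}=1$, as required.
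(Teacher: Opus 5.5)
Your proposal is correct and is exactly the paper's argument: the corollary is obtained by setting $i=r$ in Theorem~\ref{thm:parity-sum}. There $s_r=\min\{r,0\}=0$ makes the product empty and $\binom{r-1}{r-1}=1$, which leaves the coefficient $\frac{(-1)^{r-1}}{4^{r-1}}\binom{k-r}{r-1}$.
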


For example, when $r=2$ and $k=5$, we obtain
\begin{align*}
\zeta_{\A}^{(2)}(2,3)+\zeta_{\A}^{(2)}(4,1)=-\frac{3}{4}\zeta_{\A}^{(2)}(5).
\end{align*}
When $r=3$ and $k=7$, we obtain
\begin{align*}
\zeta_{\A}^{(2)}(2,2,3)+\zeta_{\A}^{(2)}(2,4,1)+\zeta_{\A}^{(2)}(4,2,1)=\frac{3}{8}\zeta_{\A}^{(2)}(7).
\end{align*}

%%---------------------------%%--------------------
\section{Ohno--Zagier type relation}\label{sec:oz-sum}
Recall that 
\begin{align*}
S(k,r,h)=\sum_{\boldsymbol{k}\in I(k,r,h)}\zeta_{\A}^{(2)}(k_1,\ldots,k_r),\quad k\ge r\ge 1,\quad 0\le h\le r,
\end{align*}
where $I(k, r, h)$ denotes the set of indices of weight $k$, depth $r$, and height $h$.
For formal variables $x,y,u$, we consider the generating function
\begin{align*}
F(x,y,u)=1+\sum_{k\ge r\ge1}\sum_{h=0}^{r}S(k,r,h)x^{k-r}y^ru^h.%\label{eq:F}
\end{align*}

In this section, we first prove Theorem \ref{thm:oz-sum}, which gives an Ohno--Zagier type identity for the above generating function, and then derive some applications.

\subsection{Proof of Theorem \ref{thm:oz-sum}}\label{subsec:proofTheorem}
For an odd prime $p$ and an index $\boldsymbol{k}=(k_1,\ldots,k_r)$, define 
\begin{align}
\zeta_{p}^{(2)}(\boldsymbol{k})=\zeta_{p}^{(2)}(k_1,\ldots,k_r)=\sum_{0<n_1<\cdots<n_r<p/2}\frac{1}{n_1^{k_1}\cdots n_r^{k_r}}\pmod p.\label{eq:pzeta}
\end{align}
Then, we have $$\zeta_{\A}^{(2)}(\boldsymbol{k})=(\zeta_{p}^{(2)}(\boldsymbol{k}))_p.$$
Accordingly, set
\begin{align*}
S_p(k,r,h)=\sum_{\boldsymbol{k}\in I(k,r,h)}\zeta_{p}^{(2)}(k_1,\ldots,k_r),\quad k\ge r\ge 1,\quad 0\le h\le r,
\end{align*}
and define
\begin{align*}
 F_p(x,y,u)=1+\sum_{k\ge r\ge1}\sum_{h=0}^{r}S_p(k,r,h)x^{k-r}y^ru^h.%\label{eq:F}
\end{align*}
Note that $x$ records the weight minus the depth, $y$ records the depth, and $u$ records the height. Moreover,
$$F(x,y,u)=(F_p(x,y,u))_p\quad {\rm in}~\A[[x,y,u]].$$

We first establish the following product formula for $F_p(x,y,u)$.
\begin{lem}\label{lem:Fp-product}
Let $p$ be an odd prime and set $N=(p-1)/2$. Then
\begin{align}
 F_p(x,y,u)=\prod_{m=1}^{N}\frac{(m-\alpha)(m-\beta)}{m(m-x)},\label{eq:Fp-product}
\end{align}
where $\alpha$ and $\beta$ are formally determined by 
\begin{align}
\alpha+\beta=x-y,\qquad\alpha\beta=-xy(1-u).\label{eq:alphabeta2}
\end{align}
\end{lem}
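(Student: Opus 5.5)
The plan is to expand the right-hand side of \eqref{eq:Fp-product} factor by factor and to match it term by term with the definition of $F_p(x,y,u)$. Throughout I work in $(\Z/p\Z)[[x,y,u]]$. For $1\le m\le N=(p-1)/2$ we have $m\not\equiv0\pmod p$, so $m$ is a unit there and $m-x$ is invertible as a power series. Since only the symmetric functions $\alpha+\beta$ and $\alpha\beta$ of \eqref{eq:alphabeta2} will appear, the right-hand side is a well-defined element of this ring.

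\textbf{Step 1: rewrite each factor.} Using \eqref{eq:alphabeta2},
\begin{align*}
(m-\alpha)(m-\beta)=m^2-(x-y)m-xy(1-u)=m(m-x)+y(m-x)+uxy.
\end{align*}
Hence
\begin{align*}
\frac{(m-\alpha)(m-\beta)}{m(m-x)}=1+\frac{y}{m}+\frac{uxy}{m(m-x)}.
\end{align*}
Next, expand $\frac{x}{m-x}=\sum_{k\ge2}x^{k-1}/m^{k-1}$ as a geometric series. The factor then becomes
\begin{align*}
1+\sum_{k\ge1}\frac{x^{k-1}y\,u^{[k\ge2]}}{m^{k}},
\end{align*}
where $[k\ge2]$ is $1$ if $k\ge2$ and $0$ otherwise. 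This is exactly the weight $x^{k-1}yu^{[k\ge2]}$ assigned to a single component $k$: $x$ counts $k-1$, $y$ counts one unit of depth, and $u$ counts one unit of height when $k\ge2$.

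\textbf{Step 2: expand the product over $m$.} Expanding $\prod_{m=1}^{N}$ amounts to choosing, for each $m$, either the term $1$ or a term with some exponent $k_m\ge1$. The set of $m$ at which a nontrivial term is chosen is a subset $\{n_1<\cdots<n_r\}\subseteq\{1,\ldots,N\}$, and the exponents chosen there form an arbitrary index $(k_1,\ldots,k_r)$. The resulting monomial is
\begin{align*}
\frac{x^{\wt(\boldsymbol{k})-r}\,y^{r}\,u^{\height(\boldsymbol{k})}}{n_1^{k_1}\cdots n_r^{k_r}}.
\end{align*}
The condition $n_r\le N$ is equivalent to $n_r<p/2$. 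Grouping terms by $(k,r,h)$ therefore gives $\sum_{\boldsymbol{k}\in I(k,r,h)}\zeta_p^{(2)}(\boldsymbol{k})=S_p(k,r,h)$ as the coefficient of $x^{k-r}y^ru^h$, by \eqref{eq:pzeta}. The empty subset contributes the constant term $1$, so the expanded product equals $F_p(x,y,u)$.

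\textbf{Main obstacle.} There is no deep difficulty; the only point needing care is formal legitimacy. The product is finite, and each factor is a power series whose nonconstant part has positive total degree in $(x,y)$: every nontrivial term carries $y$. So each coefficient of $x^ay^bu^c$ receives contributions from only finitely many choices, and the rearrangement into the sums $S_p(k,r,h)$ is valid.
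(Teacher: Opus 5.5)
Your proof is correct and is essentially the paper's argument run in the opposite direction. The paper expands $F_p$ combinatorially into $\prod_{m=1}^{N}\bigl(1+\frac{y}{m}+yu\sum_{j\ge2}\frac{x^{j-1}}{m^j}\bigr)$, then sums the geometric series and factors the numerator as $(m-\alpha)(m-\beta)$. You instead start from $(m-\alpha)(m-\beta)=m(m-x)+y(m-x)+uxy$ and expand back to the same per-factor series before reading off the coefficients $S_p(k,r,h)$.
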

\proof
For a fixed index $\boldsymbol{k}=(k_1,\ldots,k_r)$ of weight $k$ and height $h$,  each summand
$$\frac{x^{k-r}y^ru^h}{n_1^{k_1}\cdots n_r^{k_r}},
\qquad 1\le n_1<\cdots<n_r\le N,$$
corresponds to choosing $r$ distinct integers from $\{1,\ldots,N\}$, where $n_i$ carries the exponent $k_i$. For each $m\in\{1,\ldots,N\}$, either $m$ is not chosen or it is chosen with some exponent $j\ge1$. Since the summation variables are strictly increasing, each $m$ can occur at most once.

If $m$ is not chosen, its contribution is $1$. If $m$ is  chosen with exponent $j\ge 1$, then the factor $y$ records the increase of the depth by $1$ and  $x^{j-1}$ records the contribution $j-1$ to $k-r$. Moreover, an additional factor $u$ occurs precisely when $j\ge2$, since in this case the height increases by one. Hence, the contribution corresponding to $m$ is $y/m$ when $j=1$, and $yu x^{j-1}/m^j$ when $j\ge2$. Therefore, multiplying over all $m=1,\ldots,N$, we obtain
\begin{align*}
 F_p(x,y,u)=\prod_{m=1}^{N}\left(1+\frac{y}{m}+yu\sum_{j\ge2}\frac{x^{j-1}}{m^j}\right).
\end{align*}
Since $$\sum_{j\ge2}\frac{x^{j-1}}{m^j}=\frac{x}{m(m-x)},$$
we have $$1+\frac{y}{m}+yu\sum_{j\ge2}\frac{x^{j-1}}{m^j}=\frac{m^2+(y-x)m-xy(1-u)}{m(m-x)}.$$

Using \eqref{eq:alphabeta2}, we have
\begin{align*}
m^2+(y-x)m-xy(1-u)=(m-\alpha)(m-\beta).
\end{align*}
Hence, we get the desired formula \eqref{eq:Fp-product}.
\qed

\begin{proofof}{Theorem \ref{thm:oz-sum}}
By Lemma \ref{lem:Fp-product}, for any odd prime $p$,
\begin{align*}
 F_p(x,y,u)=\prod_{m=1}^{N}\frac{(1-\alpha/m)(1-\beta/m)}{1-x/m},
\end{align*}
where $N=(p-1)/2$ and $\alpha,\beta$ are determined by \eqref{eq:alphabeta2}. For any fixed coefficient in $x,y,u$, the following computation is valid for all sufficiently large $p$. Taking the formal logarithm and using $$\log(1-z)=-\sum\limits_{n\ge1}\frac{z^n}{n},$$ we obtain
\begin{align*}
\log F_p(x,y,u)&=\sum_{m=1}^{N}\left(\log\left(1-\frac{\alpha}{m}\right)+\log\left(1-\frac{\beta}{m}\right)-\log\left(1-\frac{x}{m}\right)\right)\\
&=\sum_{n\ge1}\frac{x^n-\alpha^n-\beta^n}{n}\sum_{m=1}^{N}\frac{1}{m^n}\\
&=\sum_{n\ge1}\frac{\zeta_p^{(2)}(n)}{n}\left(x^n-\alpha^n-\beta^n\right).
\end{align*}
Since $\zeta_{\A}^{(2)}(n)=(\zeta_p^{(2)}(n))_p$ and $F(x,y,u)=(F_p(x,y,u))_p$, we have
\begin{align*}
\log F(x,y,u)=\sum_{n\ge1}\frac{\zeta_{\A}^{(2)}(n)}{n}\left(x^n-\alpha^n-\beta^n\right).
\end{align*}
Exponentiating both sides yields  \eqref{eq:OZ} as desired.
\end{proofof}

\subsection{Sum of fixed weight and depth}
Let
\begin{align*}
S(k,r)=\sum_{k_1+\cdots+k_r=k}\zeta_{\mathcal A}^{(2)}(k_1,\ldots,k_r)
\end{align*}
denote the sum of level-two FMZVs of fixed weight $k$ and depth $r$ with $k\ge r\ge1$.

Taking $u=1$ in Theorem \ref{thm:oz-sum}, we have $\alpha\beta=0$ and $\alpha+\beta=x-y$. We may take $(\alpha,\beta)=(0,x-y)$. Since $$\sum\limits_{h=0}^rS(k,r,h)=S(k,r),$$ we obtain the following formula for the generating function of $S(k,r)$.

\begin{cor}
Let $x,y$ be formal variables. In $\A[[x,y]]$, we have
\begin{align}
1+\sum_{k\ge r\ge1}S(k,r)x^{k-r}y^r=\exp\left\{\sum_{n\ge1}\frac{\zeta_{\mathcal A}^{(2)}(n)}{n}\left(x^n-(x-y)^n\right)\right\}.\label{eq:fixed-wtdep}
\end{align}
\end{cor}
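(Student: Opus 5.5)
This corollary is the specialization $u=1$ of Theorem \ref{thm:oz-sum}. First I will check that setting $u=1$ is legitimate in $\A[[x,y,u]]$. Both sides of \eqref{eq:OZ} are power series in $x,y$ whose coefficients are polynomials in $u$. On the left this holds because $h$ ranges over $0\le h\le r$. On the right, $\alpha^n+\beta^n$ is a polynomial in $\alpha+\beta=x-y$ and $\alpha\beta=-xy(1-u)$, hence a polynomial in $x,y,u$ that is homogeneous of degree $n$ in $(x,y)$. So the exponent is a power series in $x,y$ with no constant term, and its coefficients are polynomial in $u$. The substitution $u\mapsto1$ is therefore a well-defined ring homomorphism $\A[[x,y]][u]\to\A[[x,y]]$ on the relevant subring, and it commutes with $\exp$ of series without constant term. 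This happens coefficientwise, because each coefficient of the exponential involves only finitely many coefficients of the exponent.

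On the left-hand side, $u=1$ turns $\sum_{h=0}^r S(k,r,h)$ into $S(k,r)$. This uses the fact that $I(k,r)$ is the disjoint union of the $I(k,r,h)$ over $0\le h\le r$.

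On the right-hand side, $u=1$ gives $\alpha\beta=0$ and $\alpha+\beta=x-y$. The power sums $\alpha^n+\beta^n$ are determined by Newton's identities from the elementary symmetric functions $e_1=x-y$ and $e_2=0$. So $\alpha^n+\beta^n=e_1^n=(x-y)^n$, which is the value one gets by formally taking $(\alpha,\beta)=(0,x-y)$. Substituting into \eqref{eq:OZ} yields \eqref{eq:fixed-wtdep}.

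The only point that needs care is justifying the specialization, and it is mild. An alternative that avoids specialization altogether is to redo Lemma \ref{lem:Fp-product} with $u=1$. Each factor then becomes $1+\frac{y}{m}+\frac{yx}{m(m-x)}=\frac{m-x+y}{m-x}$, so $F_p(x,y,1)=\prod_{m=1}^N\frac{1-(x-y)/m}{1-x/m}$. Taking logarithms as in the proof of Theorem \ref{thm:oz-sum} gives the same formula directly. I would mention this as a sanity check.
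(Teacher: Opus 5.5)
Your proposal is correct and follows the paper's argument: set $u=1$ in Theorem~\ref{thm:oz-sum}, use $\sum_{h=0}^r S(k,r,h)=S(k,r)$, and note that $\alpha\beta=0$ and $\alpha+\beta=x-y$ force $\alpha^n+\beta^n=(x-y)^n$. Your justification of the specialization is a useful addition that the paper omits, with one correction: the ring on which $u\mapsto 1$ acts is $\A[u][[x,y]]$, not $\A[[x,y]][u]$, because the $u$-degrees of the coefficients are unbounded.
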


Since $\zeta_{\mathcal A}^{(2)}(n)=0$ for even $n$ by \eqref{eq:evenzero}, the right-hand side of \eqref{eq:fixed-wtdep} is invariant under the substitution $x\mapsto y-x$. Therefore, we have
\begin{align*}
\sum_{k\ge r\ge1}S(k,r)x^{k-r}y^r=\sum_{k\ge r\ge1}S(k,r)(y-x)^{k-r}y^r.
\end{align*}
Comparing the coefficients of $x^{k-r}y^r$ on both sides of the above formula, we obtain
\begin{align}
S(k,r)=(-1)^{k-r}\sum_{j=1}^{r}\binom{k-j}{r-j}S(k,j).\label{eq:fixed-downdep}
\end{align}
More precisely, we have
\begin{align*}
S(k,r)=-\frac{1}{2}\sum_{j=1}^{r-1}\binom{k-j}{r-j}S(k,j),\quad k-r:\ {\rm odd},
\end{align*}
and
\begin{align*}
\sum_{j=1}^{r-1}\binom{k-j}{r-j}S(k,j)=0,\quad k-r:\ {\rm even}.
\end{align*}

\begin{rem}
When $k-r$ is odd, \eqref{eq:fixed-downdep} expresses
$S(k,r)$ in terms of sums of smaller depth, while when $k-r$ is even, it gives a relation among sums of smaller depth. On the other hand, \cite[Theorem 3.1 (i)]{KanekoMurakamiYoshihara2023} expresses $S(k,r)$ as a linear combination of level-two FMZVs whose index components are all odd.
\end{rem}

\subsection{Sum of height zero and height one}
An index has height zero if and only if all its components are equal to $1$. Here and below, we use the convention $\zeta_{\A}^{(2)}(\{1\}^0)=1$. Taking $u=0$ in Theorem \ref{thm:oz-sum}, we have $\alpha+\beta=x-y$ and $\alpha\beta=-xy$, 
so that we may take $(\alpha,\beta)=(x,-y)$. Hence, using \eqref{eq:evenzero}, we obtain the following generating function for level-two FMZVs of height zero.
\begin{cor}
Let $y$ be a formal variable. Then
\begin{align}
\sum_{r\ge0}\zeta_{\A}^{(2)}(\{1\}^r)y^r=\exp\left\{\sum_{\substack{n\ge1,\ n:\ {\rm odd}}}\frac{\zeta_{\A}^{(2)}(n)}{n}y^n\right\}\quad in\ \A[[y]].\label{eq:height-zero}
\end{align}
\end{cor}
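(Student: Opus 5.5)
The plan is to specialize Theorem \ref{thm:oz-sum} at $u=0$ and then remove the even terms using \eqref{eq:evenzero}.

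First, identify the left-hand side. When $u=0$, only height-zero indices survive in the generating function $F(x,y,u)$. An index of depth $r$ has height zero exactly when it equals $(\{1\}^r)$, whose weight is $r$. So $S(k,r,0)=0$ unless $k=r$, and $S(r,r,0)=\zeta_{\A}^{(2)}(\{1\}^r)$. Setting $u=0$ in \eqref{eq:OZ} therefore makes the left-hand side
\begin{align*}
1+\sum_{r\ge1}\zeta_{\A}^{(2)}(\{1\}^r)y^r,
\end{align*}
with no $x$-dependence, because $x^{k-r}=x^0=1$. With the convention $\zeta_{\A}^{(2)}(\{1\}^0)=1$, this is exactly the left-hand side of \eqref{eq:height-zero}.

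Next, handle the right-hand side. At $u=0$, the relations \eqref{eq:alphabeta} become $\alpha+\beta=x-y$ and $\alpha\beta=-xy$. The pair $(\alpha,\beta)=(x,-y)$ satisfies both. The point to justify is that this choice is legitimate. The Remark after Theorem \ref{thm:oz-sum} says only the power sums $\alpha^n+\beta^n$ enter, and by Newton's identities these are polynomials in $\alpha+\beta$ and $\alpha\beta$. Hence $\alpha^n+\beta^n=x^n+(-y)^n$ as an identity in $\A[[x,y]]$, and the specialization $u\mapsto 0$ commutes with those polynomial expressions.

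The exponent is then
\begin{align*}
\sum_{n\ge1}\frac{\zeta_{\A}^{(2)}(n)}{n}\bigl(x^n-x^n-(-y)^n\bigr)=\sum_{n\ge1}\frac{\zeta_{\A}^{(2)}(n)}{n}(-1)^{n+1}y^n.
\end{align*}
By \eqref{eq:evenzero}, every even-$n$ term vanishes. For odd $n$ we have $(-1)^{n+1}=1$, so the exponent reduces to the sum over odd $n$ of $\zeta_{\A}^{(2)}(n)y^n/n$. Exponentiating gives \eqref{eq:height-zero}.

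The only delicate point is the specialization step. One should check that evaluating at $u=0$ is compatible with the formal exponential in $\A[[x,y,u]]$. It is, since setting $u=0$ is a continuous ring homomorphism $\A[[x,y,u]]\to\A[[x,y]]$.

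An alternative that avoids this issue is to set $u=0$ directly in Lemma \ref{lem:Fp-product}. The factor there becomes $(m-x)(m+y)/(m(m-x))=1+y/m$, so
\begin{align*}
F_p=\prod_{m=1}^{N}\Bigl(1+\frac{y}{m}\Bigr).
\end{align*}
Taking the logarithm as in the proof of Theorem \ref{thm:oz-sum} then gives the same result.
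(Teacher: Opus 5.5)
Your proposal is correct and is essentially the paper's own argument: set $u=0$ in Theorem \ref{thm:oz-sum}, note that only the indices $(\{1\}^r)$ survive on the left, take $(\alpha,\beta)=(x,-y)$ so the exponent becomes $\sum_{n\ge1}(-1)^{n+1}\zeta_{\A}^{(2)}(n)y^n/n$, and discard the even-$n$ terms using \eqref{eq:evenzero}. Your remarks justifying the root choice via symmetric functions and the continuity of $u\mapsto 0$ fill in details the paper leaves implicit.
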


Differentiating both sides of \eqref{eq:height-zero} with respect to $y$ and comparing
coefficients of $y^{r-1}$ gives
\begin{align*}
r\zeta_{\A}^{(2)}(\{1\}^r)=\sum_{\substack{1\le n\le r\\ n:\ {\rm odd}}}\zeta_{\A}^{(2)}(n)\zeta_{\A}^{(2)}(\{1\}^{r-n}),\quad r\ge1.
\end{align*}
Thus $\zeta_{\A}^{(2)}(\{1\}^r)$ can be determined recursively from the depth-one level-two FMZVs of odd weight. For example,
\begin{align*}
\zeta_{\A}^{(2)}(1,1)&=\frac{1}{2}\bigl(\zeta_{\A}^{(2)}(1)\bigr)^2,\\
\zeta_{\A}^{(2)}(1,1,1)&=\frac{1}{6}\bigl(\zeta_{\A}^{(2)}(1)\bigr)^3+\frac{1}{3}\zeta_{\A}^{(2)}(3),\\
\zeta_{\A}^{(2)}(1,1,1,1)&=\frac{1}{24}\bigl(\zeta_{\A}^{(2)}(1)\bigr)^4+\frac{1}{3}\zeta_{\A}^{(2)}(1)\zeta_{\A}^{(2)}(3).
\end{align*}

\begin{rem}
More generally, Zhao \cite[Proposition 4]{ZhaoFoundations2024} states explicit evaluations for $\zeta^{(2)}_\A(k,k)$ and $\zeta^{(2)}_\A(k,k,k)$ for $k\ge 1$. For arbitrary depth $r$, the same proposition shows that $\zeta^{(2)}_\A(\{k\}^r)$ can be expressed as a $\mathbb{Q}$-linear combination of products of depth-one level-two FMZVs of odd weight.
\end{rem}

We next consider the sum of  level-two FMZVs of height one. Since an index of weight $k$,
depth $r$, and height one is necessarily of the form
\begin{align*}
(\{1\}^a,k-r+1,\{1\}^{r-1-a}),
\qquad 1\le r\le k-1,\quad
0\le a\le r-1,
\end{align*}
we have
\begin{align*}
S(k,r,1)=\sum_{a=0}^{r-1}\zeta_{\A}^{(2)}(\{1\}^a,k-r+1,\{1\}^{r-1-a}).
\end{align*}

By extracting the coefficient of $u$ in \eqref{eq:OZ}, we obtain the following sum formula for level-two FMZVs of height one.

\begin{cor}\label{cor:height-one}
For positive integers $k,r$ with $k\ge r+1$, we have
\begin{align}
S(k,r,1)=\sum_{j=0}^{r-1}(-1)^j\zeta_{\A}^{(2)}(k-r+j+1)\zeta_{\A}^{(2)}(\{1\}^{r-j-1}).\label{eq:height-one}
\end{align}
\end{cor}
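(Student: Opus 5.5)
The plan is to extract the coefficient of $u^1$ from both sides of \eqref{eq:OZ} in Theorem \ref{thm:oz-sum}. Write the exponent as $E(x,y,u)=\sum_{n\ge1}\frac{\zeta_{\A}^{(2)}(n)}{n}\bigl(x^n-\alpha^n-\beta^n\bigr)$ and expand it in $u$ as $E=E_0+E_1u+O(u^2)$. Then the coefficient of $u$ in $\exp(E)$ is $E_1\exp(E_0)$.

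By the computation in the height-zero corollary, $\exp(E_0)=\sum_{m\ge0}\zeta_{\A}^{(2)}(\{1\}^m)y^m$. This is because at $u=0$ one may take $(\alpha,\beta)=(x,-y)$. Equivalently, $F(x,y,0)$ only sees indices of height zero, so $k=r$.

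On the left-hand side, the coefficient of $u$ is $\sum_{k\ge r\ge1}S(k,r,1)x^{k-r}y^r$.

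The main step is computing $E_1=-\sum_n\frac{\zeta_{\A}^{(2)}(n)}{n}\,\frac{\partial}{\partial u}(\alpha^n+\beta^n)\big|_{u=0}$. Since $p_n=\alpha^n+\beta^n$ is a polynomial in $\alpha+\beta$ and $\alpha\beta$, it is a genuine polynomial in $x,y,u$. To differentiate it, I would work formally with $\alpha,\beta$ as roots of $t^2-(x-y)t-xy(1-u)=0$, for instance over a suitable extension where they are power series near $(x,-y)$. Implicit differentiation gives $(2\alpha-x+y)\alpha'=-xy$, and similarly for $\beta$. At $u=0$ this yields $\alpha'=-xy/(x+y)$ and $\beta'=xy/(x+y)$. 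Hence
\begin{align*}
\frac{\partial p_n}{\partial u}\Big|_{u=0}=n\,xy\,\frac{(-y)^{n-1}-x^{n-1}}{x+y}
=-n\,xy\sum_{j=0}^{n-2}x^{n-2-j}(-y)^j,
\end{align*}
which is a polynomial identity, since $x+y=x-(-y)$ divides $x^{n-1}-(-y)^{n-1}$. If one prefers to avoid differentiating roots, the same result follows by expanding $p_n$ via Newton/Waring in $e_1=x-y$ and $e_2=-xy+xyu$ and taking the linear term in $u$. Either way,
\begin{align*}
E_1=\sum_{n\ge2}\zeta_{\A}^{(2)}(n)\sum_{j=0}^{n-2}(-1)^j x^{n-1-j}y^{j+1}.
\end{align*}

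Finally, I multiply $E_1$ by $\sum_m\zeta_{\A}^{(2)}(\{1\}^m)y^m$ and read off the coefficient of $x^{k-r}y^r$. The constraints are $n-1-j=k-r$ and $j+1+m=r$, so $n=k-r+j+1$ and $m=r-1-j$ with $0\le j\le r-1$. The condition $j\le n-2$ becomes $k\ge r+1$, which is exactly the hypothesis, so every $j\in\{0,\dots,r-1\}$ contributes. This gives \eqref{eq:height-one}.

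The only delicate point is the justification of the $u$-derivative of the symmetric functions $\alpha^n+\beta^n$. I would handle it by the polynomial-identity remark above, checking the formula against the Newton expansion for small $n$ as a sanity check.
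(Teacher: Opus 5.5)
Your proposal is correct and follows essentially the same route as the paper: extract the $u$-linear part of both sides of the Ohno--Zagier identity, identify the $u=0$ factor with the height-zero generating function via $(\alpha,\beta)=(x,-y)$, compute $\partial_u(\alpha^n+\beta^n)\big|_{u=0}=-nxy\sum_{j=0}^{n-2}(-1)^jx^{n-2-j}y^j$, and compare coefficients of $x^{k-r}y^r$. The only cosmetic difference is that you differentiate the quadratic $t^2-(x-y)t-xy(1-u)=0$, whereas the paper differentiates the relations $\alpha+\beta=x-y$ and $\alpha\beta=-xy(1-u)$ directly; these are equivalent, and your remark justifying the root-differentiation is a harmless extra.
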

\proof
Differentiating both sides of \eqref{eq:alphabeta} with respect to $u$ gives
$$\frac{\partial\alpha}{\partial u}+\frac{\partial\beta}{\partial u}=0,\quad \frac{\partial\alpha}{\partial u}\beta+\alpha\frac{\partial\beta}{\partial u}=xy.$$
At $u=0$, we may take $(\alpha,\beta)=(x,-y)$. Hence, we have 
\begin{align}
\left.\frac{\partial}{\partial u}\left(\alpha^n+\beta^n\right)\right|_{u=0}=-nxy\sum\limits_{j=0}^{n-2}(-1)^jx^{n-2-j}y^j.\label{eq:derivation-alphabeta}
\end{align}
Then, differentiating both sides of \eqref{eq:OZ} with respect to $u$ and using \eqref{eq:height-zero} and \eqref{eq:derivation-alphabeta}, we obtain
\begin{align}
&\sum\limits_{r\ge1}\sum_{k\ge r+1}S(k,r,1)x^{k-r}y^r\notag\\
&=\left(\sum_{d\ge0}\zeta_{\A}^{(2)}(\{1\}^d)y^d\right)\left(\sum_{n\ge2}\zeta_{\A}^{(2)}(n)\sum_{j=0}^{n-2}(-1)^j x^{n-1-j}y^{j+1}\right).\label{eq:gen-height-one}
\end{align}
Comparing the coefficients of $x^{k-r}y^r$ on both sides of \eqref{eq:gen-height-one} yields \eqref{eq:height-one} as desired.
\qed

\eqref{eq:height-one} shows that the sum of level-two FMZVs of height one can be expressed in terms of depth-one values and the
height-zero values $\zeta_{\A}^{(2)}(\{1\}^{j})$. Since the values $\zeta_{\A}^{(2)}(\{1\}^{j})$ are themselves determined
recursively by depth-one values, every sum $S(k,r,1)$ can ultimately
be expressed as a polynomial in depth-one level-two FMZVs of odd weight.

For example, when $r=3$, we have
\begin{align*}
&\zeta_{\A}^{(2)}(k-2,1,1)+\zeta_{\A}^{(2)}(1,k-2,1)+\zeta_{\A}^{(2)}(1,1,k-2)\\
&\qquad=\frac{1}{2}\zeta_{\A}^{(2)}(k-2)\bigl(\zeta_{\A}^{(2)}(1)\bigr)^2-\zeta_{\A}^{(2)}(k-1)\zeta_{\A}^{(2)}(1)+\zeta_{\A}^{(2)}(k).
\end{align*}
In particular, using \eqref{eq:evenzero}, the cases $k=4$ and $k=5$
give
\begin{align*}
\zeta_{\A}^{(2)}(2,1,1)+\zeta_{\A}^{(2)}(1,2,1)+\zeta_{\A}^{(2)}(1,1,2)=-\zeta_{\A}^{(2)}(3)\zeta_{\A}^{(2)}(1),
\end{align*}
and
\begin{align*}
\zeta_{\A}^{(2)}(3,1,1)+\zeta_{\A}^{(2)}(1,3,1)+\zeta_{\A}^{(2)}(1,1,3)=\frac{1}{2}\zeta_{\A}^{(2)}(3)\bigl(\zeta_{\A}^{(2)}(1)\bigr)^2+\zeta_{\A}^{(2)}(5),
\end{align*}
respectively.

\begin{rem}
Corollary \ref{cor:height-one} can also be derived directly from the
harmonic product.
\end{rem}

\subsection{Sum of maximal height}
We now consider the maximal-height case $h=r$, which requires every component of the index to be at least $2$. For $r\ge1$ and $k\ge 2r$, define the sum of level-two FMZVs of maximal height
\begin{align*}
M(k,r)=\sum_{\substack{k_1+\cdots+k_r=k\\ k_j\ge2\ (1\le j\le r)}}\zeta_{\A}^{(2)}(k_1,\ldots,k_r).
\end{align*}

We obtain the following generating function for $M(k,r)$.
\begin{cor}
Let $x,y$ be formal variables. In $\A[[x,y]]$, we have
\begin{align}
1+\sum_{r\ge1}\sum_{k\ge2r}M(k,r)x^{k-2r}y^r=\exp\left\{\sum_{n\ge1}\frac{\zeta_{\A}^{(2)}(n)}{n}\left(x^n-\gamma^n-\delta^n\right)\right\},\label{eq:maxheight}
\end{align}
where $\gamma$ and $\delta$ are formally determined by
\begin{align}
\gamma+\delta=x,\qquad\gamma\delta=y.\label{eq:gammadelta}
\end{align}
\end{cor}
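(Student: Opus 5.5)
Our plan is to specialize Theorem \ref{thm:oz-sum} by extracting the top power of $u$ relative to $y$. In $F(x,y,u)$, the monomial attached to $S(k,r,h)$ is $x^{k-r}y^ru^h$ with $h\le r$. We substitute $y\mapsto y/u$ and then let $u\to\infty$. Concretely, we replace $y$ by $y/u$ and set $v=1/u$, so the monomial becomes $x^{k-r}y^r v^{r-h}$. Setting $v=0$ keeps exactly the terms with $h=r$, which gives $1+\sum_r\sum_{k\ge2r}S(k,r,r)x^{k-r}y^r$. Since $S(k,r,r)=M(k,r)$, writing $x^{k-r}y^r=x^{k-2r}(xy)^r$ shows this equals the left-hand side of \eqref{eq:maxheight} with $y$ replaced by $xy$.

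For the right-hand side, under the substitution the relations \eqref{eq:alphabeta} become
\begin{align*}
\alpha+\beta=x-vy,\qquad \alpha\beta=-xy(v-1)=xy(1-v).
\end{align*}
At $v=0$ these read $\alpha+\beta=x$ and $\alpha\beta=xy$. Hence, by the Remark following Theorem \ref{thm:oz-sum}, each $\alpha^n+\beta^n$ is a polynomial in $x$, $vy$ and $xy(1-v)$, and we may specialize it at $v=0$. So the right-hand side becomes $\exp\{\sum_n \zeta_{\A}^{(2)}(n)(x^n-\alpha^n-\beta^n)/n\}$ with $\alpha+\beta=x$ and $\alpha\beta=xy$. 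Both sides are therefore expressions in $x$ and $xy$, and replacing $xy$ by $y$ yields \eqref{eq:maxheight} with $\gamma+\delta=x$ and $\gamma\delta=y$.

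The main obstacle is making the specialization $v=0$ legitimate, because the substitution $y\mapsto y/u$ introduces negative powers of $u$ in $\A[[x,y,u]]$. To avoid this, we will not work at the level of the theorem. Instead we redo the argument of Lemma \ref{lem:Fp-product} directly for the restricted sum, at each prime $p$. Restricting to exponents $j\ge2$, the local factor is
\begin{align*}
1+\sum_{j\ge2}\frac{x^{j-2}y}{m^j}=1+\frac{y}{m(m-x)}=\frac{m^2-xm+y}{m(m-x)}=\frac{(m-\gamma)(m-\delta)}{m(m-x)},
\end{align*}
where the exponent of $x$ is $j-2$ because $x$ now records $k-2r$, and $y$ records the depth. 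Taking the product over $1\le m\le N$, then the formal logarithm exactly as in the proof of Theorem \ref{thm:oz-sum}, and finally passing to $\A$ coefficientwise for large $p$, gives \eqref{eq:maxheight}. The earlier symmetric-function remark ensures that $\gamma^n+\delta^n$ lies in $\mathbb{Q}[x,y]$, so the right-hand side is well defined.
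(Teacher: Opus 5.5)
Your final argument is exactly the paper's proof, and it is correct: you redo the product computation of Lemma~\ref{lem:Fp-product} with only exponents $j\ge2$, so that the local factor is $1+\frac{y}{m(m-x)}=\frac{(m-\gamma)(m-\delta)}{m(m-x)}$, then take the formal logarithm and pass to $\A$ coefficientwise for large $p$. The specialization route you set aside would also have worked: every monomial $x^{k-r}y^ru^h$ on both sides of \eqref{eq:OZ} has $h\le r$, so $y\mapsto vy$, $u\mapsto 1/v$ is a legitimate ring homomorphism on the relevant subring and no negative powers actually arise.
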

\proof
For each odd prime $p$, put $N=(p-1)/2$. As in the proof of Lemma \ref{lem:Fp-product}, for each $m\in\{1,\ldots,N\}$, either $m$ is not chosen, or it is chosen with an exponent $j\ge2$. Hence, we have
\begin{align}
1+\sum_{r\ge1}\sum_{k\ge2r}M(k,r)x^{k-2r}y^r&=\left(\prod_{m=1}^{N}\left(1+\sum_{j\ge2}\frac{yx^{j-2}}{m^j}\right)\right)_p\notag\\&=\left(\prod_{m=1}^{N}\frac{(m-\gamma)(m-\delta)}{m(m-x)}\right)_p,\label{eq:Mproduct}
\end{align}
where $\gamma$ and $\delta$ satisfy \eqref{eq:gammadelta}. For any fixed coefficient in \(x,y\), the same formal logarithmic argument as in the proof of Theorem \ref{thm:oz-sum} is valid for all sufficiently large $p$, and thus gives \eqref{eq:maxheight}.
\qed

Denote by $G(x,y)$ the generating function 
\begin{align*}
G(x,y)=1+\sum_{r\ge1}\sum_{k\ge2r}
M(k,r)x^{k-2r}y^r.
\end{align*}
By \eqref{eq:gammadelta}, we have 
\begin{align*}
  \frac{(m-\gamma)(m-\delta)}{m(m-x)}=1+\frac{y}{m(m-x)}.
\end{align*}
Then, taking the formal logarithm of \eqref{eq:Mproduct} and using
\begin{align*}
\log(1+z)&=\sum_{q\ge1}\frac{(-1)^{q-1}}{q}z^q,\quad\frac{1}{(m-x)^q}=\sum_{a\ge0}\binom{q+a-1}{a}\frac{x^a}{m^{q+a}},
\end{align*}
we obtain
\begin{align}
\log G(x,y)=\sum_{q\ge1}\sum_{a\ge0}\frac{(-1)^{q-1}}{q}\binom{q+a-1}{a}\zeta_{\A}^{(2)}(2q+a)x^ay^q.\label{eq:logM}
\end{align}

By \eqref{eq:evenzero}, only odd $a$ contribute to
\eqref{eq:logM}. Hence the coefficient of $xy^r$ in
the right-hand side of \eqref{eq:logM} is $(-1)^{r-1}\zeta_{\A}^{(2)}(2r+1)$. Since $\log G(x,y)$ is divisible by $x$, all terms $(\log G(x,y))^s$ with $s\ge2$ have degree at least $2$ in $x$. Hence the coefficient of \(xy^r\) in \(G(x,y)=\exp(\log G(x,y))\) is the same as that in \(\log G(x,y)\). Moreover, an index of weight $2r+1$ and depth $r$, with every component at least $2$, must contain exactly one component equal to $3$, while all the remaining components are equal to $2$. Therefore, we have
\begin{align}
M(2r+1,r)=\sum_{j=0}^{r-1}\zeta_{\A}^{(2)}(\{2\}^{j},3,\{2\}^{r-1-j})=(-1)^{r-1}\zeta_{\A}^{(2)}(2r+1).\label{eq:Mminimal1}
\end{align}

Furthermore, $\log G(x,y)$ has no term of degree $2$ in $x$, so the coefficient of $x^2y^r$ in $G(x,y)=\exp(\log G(x,y))$ comes only from one half of the square of the $x$-linear part of $\log G(x,y)$.
By \eqref{eq:logM}, this $x$-linear part is
\begin{align*}
x\sum_{q\ge1}
(-1)^{q-1}\zeta_{\A}^{(2)}(2q+1)y^q.
\end{align*}
Hence,
\begin{align*}
M(2r+2,r)=\frac{(-1)^r}{2}\sum_{j=1}^{r-1}\zeta_{\A}^{(2)}(2j+1)\zeta_{\A}^{(2)}(2r-2j+1).%\label{eq:Mminimal2}
\end{align*}

The following result generalizes \eqref{eq:Mminimal1}.

\begin{prop}\label{prop:insertion}
For integers $q\ge0$ and $m\ge1$, we have
\begin{align}
\sum_{\substack{a+b=q\\ a,b\ge 0}}
\zeta_{\A}^{(2)}(\{2\}^{a},m,\{2\}^{b})=(-1)^q\zeta_{\A}^{(2)}(m+2q).\label{eq:insertion}
\end{align}
\end{prop}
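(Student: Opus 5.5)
Fix an odd prime $p$, put $N=(p-1)/2$, and work prime by prime with a generating-function argument of the same product type as Lemma~\ref{lem:Fp-product} and \eqref{eq:Mproduct}. The plan is to encode the left-hand side of \eqref{eq:insertion} for all $m$ and $q$ at once. Introduce formal variables $t$ (tracking $m$) and $y$ (tracking the number $q$ of $2$'s), and consider
\begin{align*}
H_p(t,y)=\sum_{q\ge0}\sum_{m\ge1}\sum_{a+b=q}\zeta_p^{(2)}(\{2\}^a,m,\{2\}^b)\,t^{m-1}y^q .
\end{align*}
Expanding the definition, a term of $H_p$ is a choice of $n$ (the position carrying $m$), of $a$ distinct integers below $n$ and $b$ distinct integers above $n$, each weighted by $y/n_j^2$, together with the factor $t^{m-1}/n^m$ at $n$. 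Summing over $m$ gives
\begin{align*}
H_p(t,y)=\sum_{n=1}^{N}\frac{1}{n-t}\prod_{\substack{1\le j\le N\\ j\ne n}}\Bigl(1+\frac{y}{j^2}\Bigr)
=\prod_{j=1}^{N}\Bigl(1+\frac{y}{j^2}\Bigr)\sum_{n=1}^{N}\frac{1}{n-t}\cdot\frac{n^2}{n^2+y}.
\end{align*}

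The key input is that $\prod_{j=1}^N(1+y/j^2)=\sum_{q}\zeta_p^{(2)}(\{2\}^q)y^q$ is congruent to $1$ in $\A[[y]]$. This follows from the maximal-height corollary with $x=0$, i.e.\ $\gamma+\delta=0$ and $\gamma\delta=y$: the exponent becomes $\sum_n \zeta_{\A}^{(2)}(n)(-\gamma^n-\delta^n)/n$. For odd $n$ we have $\gamma^n+\delta^n=0$, and for even $n$ we have $\zeta_{\A}^{(2)}(n)=0$ by \eqref{eq:evenzero}, so the exponent vanishes. Equivalently, $\zeta_{\A}^{(2)}(\{2\}^q)=0$ for $q\ge1$. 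Since each fixed coefficient involves only finitely many terms, as in the proof of Theorem~\ref{thm:oz-sum}, the identity $H=(H_p)_p$ reduces in $\A[[t,y]]$ to
\begin{align*}
H(t,y)=\Bigl(\sum_{n=1}^N\frac{1}{n-t}\cdot\frac{1}{1+y/n^2}\Bigr)_p=\sum_{m\ge1}\sum_{q\ge0}(-1)^q\zeta_{\A}^{(2)}(m+2q)\,t^{m-1}y^q .
\end{align*}
Here I expand $1/(n-t)=\sum_m t^{m-1}/n^m$ and $1/(1+y/n^2)=\sum_q(-1)^q y^q/n^{2q}$. Comparing the coefficients of $t^{m-1}y^q$ then gives \eqref{eq:insertion}.

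The main obstacle is the rewriting step: the product over $j\ne n$ must be written as the full product divided by $(1+y/n^2)$, which is legitimate in $\mathbb{Q}[[y]]$ for each fixed $p$. One must also make sure that the vanishing of the full product in $\A$ is applied after this factorization, and not to the product over $j\ne n$, which does not vanish. Because $\A$ is a ring and the full product is a common factor of every coefficient identity, the passage to $\A$ is straightforward. As a sanity check, the case $m=3$ should recover \eqref{eq:Mminimal1}.
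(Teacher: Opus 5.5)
Your proof is correct and follows essentially the paper's argument: the paper also writes the generating function as $\sum_{n\le N} n^{-m}\prod_{l\ne n}(1+y/l^2)$, factors out $B_p(y)=\prod_{l\le N}(1+y/l^2)$, shows $(B_p(y))_p=1$ using \eqref{eq:evenzero}, and expands $1/(1+y/n^2)$ to read off coefficients. The differences are cosmetic. You track all $m$ at once with the extra variable $t$, and you obtain $(B_p(y))_p=1$ by specializing the maximal-height corollary at $x=0$, whereas the paper computes $\log B_p(y)$ directly.
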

\proof
For each odd prime $p$, put $N=(p-1)/2$. We consider the generating
function
\begin{align*}
H_p(y)=\sum_{q\ge0}\sum_{\substack{a+b=q\\ a,b\ge0}}\zeta_p^{(2)}(\{2\}^{a},m,\{2\}^{b})y^q,
\end{align*}
where $\zeta_p^{(2)}$ is defined by \eqref{eq:pzeta}. Then we have
\begin{align}
H_p(y)=\sum_{n=1}^{N}\frac{1}{n^m}\prod_{\substack{1\le l\le N\\ l\ne n}}\left(1+\frac{y}{l^2}\right).
\label{eq:Hp}
\end{align}
Indeed, after fixing $n$ as the variable carrying the exponent $m$,
the selected variables smaller than $n$ contribute the block
$\{2\}^{a}$, while those larger than $n$ contribute the block
$\{2\}^{b}$. Now define
\begin{align}
B_p(y)=\prod_{l=1}^{N}\left(1+\frac{y}{l^2}\right).\label{eq:Bpdef}
\end{align}
For any fixed coefficient in $y$, the following computation is valid for all sufficiently large $p$. Taking the formal logarithm, we obtain
\begin{align*}
\log B_p(y)=\sum_{n=1}^{N}
\log\left(1+\frac{y}{n^2}\right)=
\sum_{j\ge1}\frac{(-1)^{j-1}}{j}y^j\sum_{n=1}^{N}\frac{1}{n^{2j}}.
\end{align*}
By \eqref{eq:evenzero}, for every $j\ge1$,
\begin{align*}
\left(\sum_{n=1}^{N}\frac{1}{n^{2j}}\right)_p=\zeta_{\A}^{(2)}(2j)=0.
\end{align*}
Hence, we have
\begin{align}
\bigl(B_p(y)\bigr)_p=1.\label{eq:By}
\end{align}
Using \eqref{eq:Hp}, \eqref{eq:Bpdef} and \eqref{eq:By}, we get
\begin{align*}
\sum_{q\ge0}\sum_{\substack{a+b=q\\ a,b\ge0}}\zeta_{\A}^{(2)}(\{2\}^{a},m,\{2\}^{b})y^q=\left(\sum_{n=1}^{N}\frac{1}{n^m}\frac{1}{1+y/n^2}\right)_p.
\end{align*}
Since
\begin{align*}
\frac{1}{1+y/n^2}=\sum_{q\ge0}(-1)^q\frac{y^q}{n^{2q}},
\end{align*}
it follows that
\begin{align*}
\sum_{q\ge0}\sum_{\substack{a+b=q\\ a,b\ge0}}\zeta_{\A}^{(2)}(\{2\}^{a},m,\{2\}^{b})y^q=\sum_{q\ge0}(-1)^q\zeta_{\A}^{(2)}(m+2q)y^q.
\end{align*}
Comparing the coefficients of $y^q$ on both sides, we obtain the desired formula \eqref{eq:insertion}.
\qed

Taking $m=3$ and $q=r-1$ in Proposition \ref{prop:insertion} recovers \eqref{eq:Mminimal1}. On the other hand, taking $m=1$ gives
\begin{align}
\sum_{j=0}^{r-1}\zeta_{\A}^{(2)}(\{2\}^{j},1,\{2\}^{r-1-j})=(-1)^{r-1}\zeta_{\A}^{(2)}(2r-1).\label{eq:sumextremal}
\end{align}
\eqref{eq:sumextremal} gives the sum over all possible positions of $1$ in the indices
$(\{2\}^{a},1,\{2\}^{b})$ and \eqref{eq:extremal} refines \eqref{eq:sumextremal} by evaluating
each position separately.

\begin{rem}
In fact, by the harmonic product, we have
\begin{align*}
q\zeta_{\A}^{(2)}(\{2\}^q)=\sum_{j=1}^{q}(-1)^{j-1}\zeta_{\A}^{(2)}(2j)\zeta_{\A}^{(2)}(\{2\}^{q-j}).
\end{align*}
Since $\zeta_{\A}^{(2)}(2j)=0$ for $j\ge1$, this implies
\begin{align}
\zeta_{\A}^{(2)}(\{2\}^q)=0,\qquad q\ge1.\label{eq:repp}
\end{align}
Moreover, \eqref{eq:insertion} can also be derived directly from the harmonic product together with \eqref{eq:repp}.
\end{rem}

%\section*{Acknowledgments}

\end{document}